\newtheorem{theorem}{Theorem}[section]
\newtheorem{definition}[theorem]{Definition}
\newtheorem{example}[theorem]{Example}
\newtheorem{rmk}[theorem]{Remark}
\newtheorem{corollary}[theorem]{Corollary}
\newtheorem{question}[theorem]{Question}
\numberwithin{equation}{section}
\newenvironment{acknowledgement}{Acknowledgment:}
\newcommand{\HH}{\mathcal{H}}
\renewcommand{\phi}{\varphi}
\renewcommand{\emptyset}{\varnothing}
\def\Ddots{\mathinner{\mkern1mu\raise\p@
\vbox{\kern7\p@\hbox{.}}\mkern2mu
\raise4\p@\hbox{.}\mkern2mu\raise7\p@\hbox{.}\mkern1mu}}
\begin{document}
\title{The face vector of a half-open hypersimplex}
\author{Takayuki Hibi}
\author{Nan Li}
\thanks{Corresponding author: Nan Li: {\tt nan@math.mit.edu}}
\author{Hidefumi Ohsugi}
\subjclass[2010]{Primary 05A15; Secondary 52B05}
\keywords{half-open hypersimplex, $f$-vector, generating function.}
\address{Takayuki Hibi,
Department of Pure and Applied Mathematics,
Graduate School of Information Science and Technology,
Osaka University,
Toyonaka, Osaka 560-0043, Japan}
\email{hibi@math.sci.osaka-u.ac.jp}
\address{Nan Li,
Department of mathematics,
Massachusetts Institute of Technology,
Cambridge, MA 02139, USA}
\email{nan@math.mit.edu}
\address{Hidefumi Ohsugi,
Department of 
Mathematical Sciences,
School of Science and Technology,
Kwansei Gakuin University,
Sanda, Hyogo, 669-1337, Japan} 
\email{ohsugi@kwansei.ac.jp}

\maketitle

\begin{abstract}
The half-open hypersimplex $\Delta'_{n,k}$ consists of those
$(x_{1}, \ldots, x_{n}) \in[0,1]^n$ with
$k-1<x_1+\cdots+x_n\le k$, where $0 < k \leq n$.
The $f$-vector of a half-open hypersimplex and related generating functions
are explicitly studied.  
\end{abstract}

\section*{Introduction}
A hypersimplex is one of the most basic polytopes and has been well studied. In \cite{Sta}, 
Stanley gave a geometric proof that the normalized volume of the hypersimplex is the Eulerian number. De Loera, Sturmfels and Thomas \cite{DST} studied 
a natural connection of the hypersimplex with Gr\"{o}bner bases. Lam and Postnikov \cite{LP} studied four triangulations of the hypersimplex and showed 
that these triangulations are identical. Other people have also looked at the Ehrhart $h^*$-vectors of the hypersimplex, for example \cite{Kat} and \cite{L}.
The half-open hypersimplex is introduced in \cite{L}, where Li proved a conjecture of Stanley on nice combinatorial description of the Ehrhart $h^*$-vectors.
In this paper, we study the $f$-vectors of the hypersimplex and the half-open hypersimplex.

Let $k$ and $n$ be integers with $0 < k \leq n$.
The {\em hypersimplex} $\Delta_{n,k}$ and 
the {\em half-open hypersimplex} $\Delta'_{n,k}$
are defined as follows:
\[
\Delta_{n,k}=\{ \,  (x_{1}, \ldots, x_{n}) \in[0,1]^n \, : \,  k-1 \leq x_1+\cdots+x_n \le k \, \},
\]
\[
\Delta'_{n,k}=\{ \,  (x_{1}, \ldots, x_{n}) \in[0,1]^n \, : \,  k-1<x_1+\cdots+x_n\le k \, \}.
\]
Let 
$f_{j} (\Delta_{n,k})$ 
denote the number of $j$-faces of $\Delta_{n,k}$
and 
$f_{j}(\Delta'_{n,k})$
those of $\Delta'_{n,k}$, where  
$j = 0, 1, \ldots, n$.  The {\em $f$-vector} of $\Delta_{n,k}$ is
$f(\Delta_{n,k}) = (f_{0}(\Delta_{n,k}), f_{1}(\Delta_{n,k}), \ldots, f_{n}(\Delta_{n,k}))$ 
and that of $\Delta'_{n,k}$ is $f(\Delta'_{n,k})=(f_{0}(\Delta'_{n,k}), f_{1}(\Delta'_{n,k}), \ldots, f_{n}(\Delta'_{n,k}))$.
The computation of the $f$-vector of $\Delta_{n,k}$ is discussed in 
\cite[Exercise 38]{Ziegler}.
In the present paper we are interested in the $f$-vector of $\Delta'_{n,k}$.

First, in Section $1$, a formula to compute $f(\Delta'_{n,k})$ 
is obtained (Theorem \ref{f}).  The formula yields easily a formula to compute 
$f(\Delta_{n,k})$ (Corollary \ref{f-}).
Section $2$ is devoted to the study of generating functions related to $f(\Delta'_{n,k})$
and $f(\Delta_{n,k})$.
More precisely, we show that
\[
\sum_{n\geq1}
\sum_{k=1}^n
\sum_{j=1}^n
f_{j}(\Delta'_{n,k})\ 
x^k y^{n-k} t^j
=
\frac{(1-x)xt}{(1-x-y)(1-x-y-xt)(1-x-y-yt)},
\]
\[
\sum_{n\geq 1}
\sum_{k=1}^n
\sum_{j=1}^n
f_{j}(\Delta_{n,k})\ 
x^k y^{n-k}t^j
=
\frac{xt}{(1-x-y)(1-x-y-xt)(1-x-y-yt)},
\]
\[
\sum_{k=1}^n
f_{j}(\Delta'_{n,k})
=
j \cdot 2^{n-j-1} \frac{n+j+2}{n+1}
\cdot
\binom{n+1}{j+1}.
\]
Finally, in Section $3$, we propose two open questions.

\section{Face numbers of $\Delta'_{n,k}$}

In this section, we study the $f$-vector of $\Delta'_{n,k}$.  

\begin{example}
\label{Boston}
{\em 
Since the hyperplanes of $\Delta'_{n,k}$ are
\begin{eqnarray*}
\HH &=&\{ (x_1,\ldots,x_n)\in [0,1]^n \ : \ x_1+\cdots+x_n=k\},\\
\HH_i^{(0)} &=&\{ (x_1,\ldots,x_n) \in [0,1]^n \ : \  x_i=0 \} \ \ (i=1,\dots,n) ,\label{zero}\\ 
\HH_i^{(1)} &=&\{ (x_1,\ldots,x_n) \in [0,1]^n\ : \  x_i=1 \} \ \ (i=1,\dots,n) ,\label{one}
\end{eqnarray*}
it follows that
$f_{n-1}(\Delta'_{n,k})=2n+1$ for $2 \leq k<n$ 
(and $f_{n-1}  (\Delta'_{n,1}) = n+1$, $f_{n-1}  (\Delta'_{n,n}) = n$).
Let us then see an example of $f_{n-2}(\Delta'_{n,k})$ for $2=k < n$.
This is equivalent to computing pairs of 
hyperplanes $h_1, h_2$ which has $(n-2)$-dimensional intersection with 
$\Delta'_{n,k}$. Here is the enumeration: 
 \begin{enumerate}
 \item $h_1 = \HH_i^{(1)}, \, h_2 =\HH_j^{(1)}$: 
 this kind of pairs do not count, since the intersection with $\Delta'_{n,2}$ is $\{ {\bf e}_i + {\bf e}_j \}$
  which is $0$-dimensional for all $n>2$.
 \item $h_1 = \HH_i^{(1)}, \, h_2 =\HH_j^{(0)}$: 
  this kind of pairs counts for all $n>2$, since the intersection with $\Delta'_{n,2}$ is 
  $$\left\{ \, (x_1,\ldots,x_n) \in [0,1]^n \, : \, x_i = 1,\  x_j = 0, \  0<\sum_{m\in[n]\backslash \{i,j\}}x_m\le 1 \, \right\},$$
  which is $(n-2)$-dimensional for $n>2$.
  Therefore there are $2\binom{n}{2}$ such pairs for $n>2$.
 \item $h_1 = \HH_i^{(0)}, \, h_2 =\HH_j^{(0)}$: 
  this kind of pairs counts for all $n>3$, since the intersection with $\Delta'_{n,2}$ is 
  $$\left\{ \, (x_1,\ldots,x_n) \in [0,1]^n \, : \, x_i = x_j = 0, \  1<\sum_{m\in[n]\backslash \{i,j\}}x_m\le 2 \, \right\},$$
  which is $(n-2)$-dimensional for $n>3$, and 
empty for $n = 3$. 
  Therefore there are $\binom{n}{2}$ such pairs for $n>3$, and $0$ for $n =3$.

 \item $h_1 = \HH, \, h_2 =\HH_i^{(1)}$:  the intersection with $\Delta'_{n,2}$ is 
  $$\left\{ \, (x_1,\ldots,x_n) \in [0,1]^n \, : \, x_i = 1, \ 
\sum_{m\in[n]\backslash \{i\}}x_m= 1 \, \right\},$$
  which is $(n-2)$-dimensional for $n>2$.
 \item $h_1 = \HH, \, h_2 =\HH_i^{(0)}$: 
the intersection with $\Delta'_{n,2}$ is 
  $$\left\{ \, (x_1,\ldots,x_n) \in [0,1]^n \, : \, x_i = 0,\ 
\sum_{m\in[n]\backslash \{i\}}x_m= 2 \, \right\},$$
  which is $(n-2)$-dimensional for $n>3$, and $0$-dimensional for 
$n = 3$.

\end{enumerate}
In conclusion, for $k=2$ and $n\ge 3$, one has
$$
f_{n-2} (\Delta'_{n,2}) =
\left\{
\begin{array}{ll}
2\binom{n}{2}+n & \, \, \, \, \text{if} \, \, \, \, n=3,\\
\smallskip\\
3\binom{n}{2}+2n & \, \, \, \, \text{if} \, \, \, \, n>3.
\end{array}
\right.
$$
(It is easy to see that $f_0 (\Delta'_{2,2}) = 1$.)
}
\end{example}

Following the enumeration method in Example \ref{Boston}, one has the following general formula. 

\begin{theorem}\label{f}
Let $0 < k \leq n$
and $f(\Delta'_{n,k})=(f_{0} (\Delta'_{n,k}), \ldots, f_{n} (\Delta'_{n,k}))$
the $f$-vector of the half-open hypersimplex $\Delta'_{n,k}$.
Then one has $f_0 (\Delta'_{n,k}) = \binom{n}{k}$ and
$$
f_j  (\Delta'_{n,k})
=
\binom{n+1}{j+1}
\sum_{s=\max\{0,k-j\}}^{k-1}
\binom{n-j}{s}
\frac{n-s+1}{n+1}
$$
for $j = 1,2,\ldots,n$.
\end{theorem}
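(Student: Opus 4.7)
The plan is to enumerate the $j$-faces of $\Delta'_{n,k}$ following the case analysis suggested by Example~\ref{Boston}. Every face $F$ of $\Delta'_{n,k}$ is the intersection of $\Delta'_{n,k}$ with some subcollection of the hyperplanes $\HH,\HH_i^{(0)},\HH_i^{(1)}$, and is uniquely encoded by a triple $(A,B,\eps)$ consisting of the disjoint subsets $A=\{i:F\subseteq\HH_i^{(0)}\}$ and $B=\{i:F\subseteq\HH_i^{(1)}\}$ of $[n]$, together with a bit $\eps\in\{0,1\}$ recording whether $F\subseteq\HH$. The case $j=0$ is immediate: the vertices of $\Delta'_{n,k}$ are the $0/1$-vectors with exactly $k$ ones, so $f_0(\Delta'_{n,k})=\binom{n}{k}$.

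For $j\geq 1$, set $a=|A|$, $b=|B|$, $m=n-a-b$ and split on $\eps$. When $\eps=1$, elimination of the fixed coordinates presents $F$ as the hypersimplex slice $\{y\in[0,1]^m:y_1+\cdots+y_m=k-b\}$, which is $(m-1)$-dimensional and has $(A,B)$ maximal exactly when $0<k-b<m$, i.e.\ $b\leq k-1$ and $a\leq n-k-1$. When $\eps=0$, $F$ becomes $\{y\in[0,1]^m:k-1-b<y_1+\cdots+y_m\leq k-b\}$, which is $m$-dimensional with $(A,B)$ maximal exactly when $b\leq k-1$ and $a\leq n-k$. Setting $j=m-1$ in the first case and $j=m$ in the second, and writing $s=b$, the numbers of such pairs $(A,B)$ equal
\[
\binom{n}{j+1}\binom{n-j-1}{s}\qquad\text{and}\qquad\binom{n}{j}\binom{n-j}{s}
\]
respectively, each summed over $s$ in the common range $\max\{0,k-j\}\leq s\leq k-1$.

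To finish, add the two contributions term by term and invoke the binomial identity
\[
\binom{n}{j+1}\binom{n-j-1}{s}+\binom{n}{j}\binom{n-j}{s}=\binom{n+1}{j+1}\binom{n-j}{s}\,\frac{n-s+1}{n+1},
\]
which is immediate from $(n-j)\binom{n-j-1}{s}=(n-j-s)\binom{n-j}{s}$ together with $(j+1)\binom{n+1}{j+1}=(n+1)\binom{n}{j}$. Summing over $s$ gives the claimed formula.

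The main subtlety is the non-degeneracy analysis for $(A,B,\eps)$: because $\Delta'_{n,k}$ is closed on top but open on the bottom, the $\eps=0$ case permits the slightly larger range $a\leq n-k$ rather than $a\leq n-k-1$, and this one-unit discrepancy, together with the shift between face dimensions $m-1$ and $m$ in the two cases, is precisely what produces the weighting factor $(n-s+1)/(n+1)$ in the final formula.
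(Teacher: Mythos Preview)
Your proof is correct and follows essentially the same approach as the paper: both split into the two cases according to whether the face lies on $\HH$ (your $\eps=1$) or not (your $\eps=0$), obtain the same two-term expression
\[
f_j(\Delta'_{n,k})=\binom{n}{j}\sum_{s}\binom{n-j}{s}+\binom{n}{j+1}\sum_{s}\binom{n-j-1}{s},
\]
and then combine the summands via the same multinomial/binomial identity to extract the factor $(n-s+1)/(n+1)$. Your framing via the triple $(A,B,\eps)$ and the explicit statement of the binomial identity are a bit crisper, but the argument is the paper's.
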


\begin{proof}
 Similar as in Example \ref{Boston} for $f_{n-2}(\Delta'_{n,k})$ when $k=2$, here for general $k$, $f_{n-i}(\Delta'_{n,k})$ is 
 counting the $i$-set $\{h_1,\dots,h_i\}$ which has $(n-i)$-dimensional intersection with $\Delta'_{n,k}$.
 There are again two cases:
in the formula (\ref{below}) below,
 the part of $\binom{n}{i}$ 
deals with the case $\HH$ is not included 
 in these $i$ hyperplanes, and the part of $\binom{n}{i-1}$ is counting the case when $\HH$ is one of the $i$ hyperplanes. 
 
 Let us look at the first case carefully, and the second case is enumerated similarly. In the first case, the $i$-tuple $(h_1,\ldots, h_i)$ satisfies
 $$
h_1 \cap \cdots \cap h_i = 
\left(
\bigcap_{\alpha \in I} \HH_{\alpha}^{(0)}
\right)
\cap 
\left(
\bigcap_{\beta \in J} \HH_{\beta}^{(1)}
\right)
 %
$$  
%
for some
$
I,J\subset [n]
$
such that
$
I\cap J=\emptyset
$
and
$\# (I\cup J)=i
$.
 Let $s=\#J$. Then there are in total $\binom{i}{s}$ such $i$-tuples. Now the key point is whether the intersection of all these $i$ hyperplanes with $\Delta'_{n,k}$
 is $(n-i)$-dimensional or not. Here is their intersection:
 $$
\left\{(x_1,\ldots,x_n)\in [0,1]^n \, : \, 
\begin{array}{c}
x_\alpha = 0 \ (\alpha \in I), \ x_\beta =1 \ (\beta \in J)\\
k-s-1<\sum_{m\in[n]\backslash (I\cup J)}x_m\le k-s
\end{array}
\right\}.$$
 Notice that the intersection is $(n-i)$-dimensional if and only if 
$$1 \le k-s < \# ([n]\backslash ( I\cup J ))=n-i.$$
This is exactly why we have 
$\max\{0,k+i-n\}\le s \le \min\{k-1,i\}$
 in the summand when counting the above $i$-tuples. 
 Thus, we have 
\begin{eqnarray}
f_{n-i}(\Delta'_{n,k})
=\binom{n}{i}
\sum_{s=\max\{0,k+i-n\}}^{\min\{k-1,i\}}\binom{i}{s}+\binom{n}{i-1}
\sum_{s=\max\{0,k+i-n\}}^{\min\{k-1,i-1\}}\binom{i-1}{s}.
\label{below}
\end{eqnarray} 
Hence,
\begin{eqnarray*} 
f_j (\Delta'_{n,k})
 &=&
\binom{n}{j}
\sum_{s=\max\{0,k-j\}}^{k-1}
\binom{n-j}{s}
+\binom{n}{j+1}
\sum_{s=\max\{0,k-j\}}^{k-1}
\binom{n-j-1}{s}\\
 &=&
\sum_{s=\max\{0,k-j\}}^{k-1}
\left(
\binom{n}{j}
\binom{n-j}{s}
+\binom{n}{j+1}
\binom{n-j-1}{s}\right)\\
&=&
\sum_{s=\max\{0,k-j\}}^{k-1}
\binom{n+1}{j+1,s,n-j-s}
\frac{n-s+1}{n+1}\\
&=&
\binom{n+1}{j+1}
\sum_{s=\max\{0,k-j\}}^{k-1}
\binom{n-j}{s}
\frac{n-s+1}{n+1}.
\end{eqnarray*} 
 \end{proof}

\begin{example}
{\em
 Let us take $k=i=2$ for the equation (\ref{below})
in Proof of Theorem \ref{f}. 
 \begin{itemize}
  \item For $n=3$, we have $\max\{0,k+i-n\}=1$ and ${\min\{k-1,i\}}={\min\{k-1,i-1\}}=1$. 
 So $f_{n-2} (\Delta'_{n,k})=\binom{n}{2}\binom{2}{1}+\binom{n}{1}\binom{1}{1}$. 
 \item For $n>3$, we have $\max\{0,k+i-n\}=0$ and ${\min\{k-1,i\}}={\min\{k-1,i-1\}}=1$. 
 So $f_{n-2}(\Delta'_{n,k})=\binom{n}{2}\left(\binom{2}{0}+\binom{2}{1}\right)+\binom{n}{1}\left(\binom{1}{0}+\binom{1}{1}\right)$.
 \end{itemize}
This matches Example \ref{Boston} we computed.
}
\end{example}

Using the above method, it is not hard to get the $f$-vector of the hypersimplex 
$\Delta_{n,k}$
from the $f$-vector of the hypersimplex $\Delta'_{n,k}$.

\begin{corollary}\label{f-}
Let $f(\Delta_{n,k})=(f_{0}(\Delta_{n,k}), \ldots, f_{n}(\Delta_{n,k}))$
be the $f$-vector of the hypersimplex $\Delta_{n,k}$
and $f(\Delta'_{n,k})=(f_{0}(\Delta'_{n,k}), \ldots, f_{n}(\Delta'_{n,k}))$
that of the half-open hypersimplex $\Delta'_{n,k}$.
Then one has $f_0(\Delta_{n,k}) = \binom{n}{k}+\binom{n}{k-1}$
and
$$
f_{j} (\Delta_{n,k})
=
f_{j} (\Delta'_{n,k})+
\binom{n}{j+1}
\sum_{s=\max\{0,k -1-j\}}^{k-2}
\binom{n-j-1}{s}\\
=
\binom{n+1}{j+1}
\sum_{s=\max\{0,k-j\}}^{k-1}
\binom{n-j}{s}
$$
for $j = 1,2,\ldots,n$.
 \end{corollary}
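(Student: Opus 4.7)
The plan is to imitate the hyperplane-enumeration in the proof of Theorem \ref{f}, noting that the closed hypersimplex $\Delta_{n,k}$ has exactly one more bounding hyperplane than $\Delta'_{n,k}$, namely $\HH^{(k-1)}=\{x_1+\cdots+x_n=k-1\}$. The vertex count is immediate: the vertices of $\Delta_{n,k}$ are the $0/1$-vectors with exactly $k-1$ or exactly $k$ ones, giving $f_0(\Delta_{n,k})=\binom{n}{k-1}+\binom{n}{k}$.

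For $j\ge1$, I would recount $f_{n-i}(\Delta_{n,k})$ by the same method as in Theorem \ref{f}, classifying each defining $i$-set $\{h_1,\ldots,h_i\}$ of hyperplanes by how many of the two parallel horizontal hyperplanes $\HH^{(k-1)}$, $\HH^{(k)}$ it contains. At most one can appear (the two are disjoint), so there are three cases. The case of zero horizontal hyperplanes and the case containing $\HH^{(k)}$ give, verbatim, the two summands that produced $f_{n-i}(\Delta'_{n,k})$. The new case is that $\HH^{(k-1)}$ is included: write the remaining $i-1$ hyperplanes as $\HH^{(0)}_\alpha$ for $\alpha\in I$ and $\HH^{(1)}_\beta$ for $\beta\in J$ with $|I|+|J|=i-1$, $s=|J|$, giving $\binom{n}{i-1}\binom{i-1}{s}$ choices. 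The intersection with $\Delta_{n,k}$ is an $(n-i)$-face precisely when $1\le k-1-s\le n-i$. Summing and substituting $j=n-i$ produces the correction
\[
\binom{n}{j+1}\sum_{s=\max\{0,k-1-j\}}^{k-2}\binom{n-j-1}{s},
\]
which is the first displayed equality of the corollary.

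For the second equality I would expand $f_j(\Delta'_{n,k})$ using the intermediate form obtained in the proof of Theorem \ref{f}, namely
\[
f_j(\Delta'_{n,k})=\binom{n}{j}\!\!\sum_{s=\max\{0,k-j\}}^{k-1}\!\!\binom{n-j}{s}+\binom{n}{j+1}\!\!\sum_{s=\max\{0,k-j\}}^{k-1}\!\!\binom{n-j-1}{s},
\]
and then add the correction term. In the correction sum I would reindex $s\mapsto s+1$, which shifts the range $\max\{0,k-1-j\}\le s\le k-2$ to $\max\{0,k-j\}\le s\le k-1$ (with the convention $\binom{n-j-1}{-1}=0$ covering the boundary $k\le j$), and produces the binomial $\binom{n-j-1}{s-1}$. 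Combining with the $\binom{n-j-1}{s}$ already present and using Pascal's identity $\binom{n-j-1}{s}+\binom{n-j-1}{s-1}=\binom{n-j}{s}$ collapses the two $\binom{n}{j+1}$-summands into $\binom{n}{j+1}\sum_{s=\max\{0,k-j\}}^{k-1}\binom{n-j}{s}$. Adding this to the $\binom{n}{j}$-summand and invoking $\binom{n}{j}+\binom{n}{j+1}=\binom{n+1}{j+1}$ yields the claimed closed form.

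The main obstacle is only bookkeeping: one must check that the shifted lower summation index $\max\{0,k-1-j\}+1$ matches $\max\{0,k-j\}$ in both regimes $k\le j$ and $k>j$, and that the Pascal step correctly absorbs the boundary terms. Once the ranges are aligned, the identity follows by a single application of Pascal's rule.
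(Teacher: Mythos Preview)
Your proposal is correct and follows essentially the same approach as the paper: the same geometric observation that $\Delta_{n,k}$ has one extra bounding hyperplane $\HH^{(k-1)}$, the same count of the additional $i$-sets containing it, and the same reindexing $s\mapsto s+1$ of the correction sum. The only difference is in the final algebraic clean-up---you combine the two-term intermediate expression for $f_j(\Delta'_{n,k})$ via two applications of Pascal's rule, whereas the paper works instead with the multinomial form $\binom{n+1}{j+1,s,n-j-s}\frac{n-s+1}{n+1}$ and cancels using $\binom{n}{j+1,s-1,n-j-s}=\frac{s}{n+1}\binom{n+1}{j+1,s,n-j-s}$---but both routes are routine and lead to the same closed form.
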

 
\begin{proof}
The only difference with the half-open hypersimplex is that there is now one more case for the $i$-tuple $\{h_1,\dots,h_i\}$, which is when one of 
them is the hyperplane defined by $x_1+\cdots+x_n = k-1$. 
And the set of such $i$-tuples $\{h_1,\dots,h_i\}$ with $(n-i)$-dimensional intersection with $\Delta_{n,k}$ is exactly
the same as the $i$-tuples $\{h_1,\dots,h_i\}$ in the second case of
Proof of Theorem \ref{f} for the half-open hypersimplex $\Delta'_{n,k-1}$, i.e, 
when the hyperplane defined by $x_1+\cdots+x_{n-1}=k-1$
belongs to 
$\{h_1,\dots,h_i\}$. The number of such  $i$-tuples is enumerated by the second summand of the equation (\ref{below}), replacing $k$ by $k-1$. 
Therefore, we obtain the formula
$$f_{n-i} (\Delta_{n,k}) =f_{n-i}(\Delta'_{n,k})+\binom{n}{i-1}
\sum_{s=\max\{0,k-1+i-n\}}^{\min\{k-2,i-1\}}\binom{i-1}{s}.$$
Hence
\begin{eqnarray*}
 & & f_{j} (\Delta_{n,k})\\
&=&
f_{j}(\Delta'_{n,k}) +\binom{n}{j+1}
\sum_{s=\max\{0,k-1-j\}}^{k-2}\binom{n-j-1}{s}\\
&=& \hspace{-0.5cm}
\sum_{s=\max\{0,k-j\}}^{k-1}
\binom{n+1}{j+1,s,n-j-s}
\frac{n-s+1}{n+1}
+
\sum_{s=\max\{0,k -1-j\}}^{k-2}
\binom{n}{j+1, s, n-1-j-s}\\
&=& \hspace{-0.5cm}
\sum_{s=\max\{0,k-j\}}^{k-1}
\binom{n+1}{j+1,s,n-j-s}
\frac{n-s+1}{n+1}
+
\sum_{s=\max\{0,k -j\}}^{k-1}
\binom{n}{j+1, s-1, n-j-s}\\
&=& \hspace{-0.5cm}
\sum_{s=\max\{0,k-j\}}^{k-1}
\left(
\binom{n+1}{j+1,s,n-j-s}
+
\binom{n}{j+1, s-1, n-j-s}
-
\binom{n+1}{j+1,s,n-j-s}
\frac{s}{n+1}
\right)\\
&=& \hspace{-0.5cm}
\sum_{s=\max\{0,k-j\}}^{k-1}
\binom{n+1}{j+1,s,n-j-s}\\
&=&
\binom{n+1}{j+1}
\sum_{s=\max\{0,k-j\}}^{k-1}
\binom{n-j}{s},
\end{eqnarray*}
as desired.
 \end{proof}

\section{Generating functions}
Next we discuss generating functions which are related to
the $f$-vectors of half-open hypersimplices.
 
\begin{theorem}
\label{RS}
Let ${f_j}(\Delta'_{n,k})$ denote the number of $j$-faces of $\Delta'_{n,k}$.
Then, we have
$$
\sum_{n\ge1}
\sum_{k=1}^n
\sum_{j=1}^n
{f_j}(\Delta'_{n,k})\ 
x^k y^{n-k}
t^j
=
\frac{(1-x)xt}{(1-x-y)(1-x-y-xt)(1-x-y-yt)}.
$$
\end{theorem}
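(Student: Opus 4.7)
The plan is to use the explicit expression for $f_j(\Delta'_{n,k})$ obtained during the proof of Theorem~\ref{f}, namely
\[
f_j(\Delta'_{n,k}) = \binom{n}{j}\sum_{s=\max(0,k-j)}^{k-1}\binom{n-j}{s} + \binom{n}{j+1}\sum_{s=\max(0,k-j)}^{k-1}\binom{n-j-1}{s},
\]
which splits the target series as $F(x,y,t) = F_A + F_B$. I would evaluate each piece in closed form and then add.

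For each piece I would recognize the product of binomials as a multinomial, $\binom{n}{j}\binom{n-j}{s} = \binom{n}{j,s,n-j-s}$ and $\binom{n}{j+1}\binom{n-j-1}{s} = \binom{n}{j+1,s,n-j-1-s}$, and reindex $(n,k,j,s)$ by $c = j$, $b = n-j-s$ (respectively $b = n-j-1-s$), and $l = k-s$. A direct check shows this is a bijection between the admissible tuples and $\{(c,s,b,l) : c \geq 1,\ s,b \geq 0,\ 1 \leq l \leq c\}$; in particular the awkward constraint $\max(0,k-j) \leq s \leq \min(k-1,n-j)$ collapses to $1 \leq l \leq c$, decoupling $l$ from $(s,b)$.

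The engine of the computation is then the standard identity
\[
\sum_{s,b \geq 0}\binom{c+s+b}{c,s,b}x^s y^b = \frac{1}{(1-x-y)^{c+1}},
\]
which dispatches the $(s,b)$-sum in $F_A$, and its $c+2$ analogue handles $F_B$. What remains is the finite geometric sum $\sum_{l=1}^c x^l y^{c-l} = x(y^c-x^c)/(y-x)$ together with a pair of geometric series in $c$ with ratios $xt/(1-x-y)$ and $yt/(1-x-y)$. After the $(y-x)$ factors cancel, I expect
\[
F_A = \frac{xt}{(1-x-y-xt)(1-x-y-yt)}, \qquad F_B = \frac{xyt}{(1-x-y)(1-x-y-xt)(1-x-y-yt)},
\]
and adding these, with $(1-x-y)+y = 1-x$ in the combined numerator, yields the claimed expression.

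The only real obstacle is careful bookkeeping of the index ranges during the reindexing (in particular, verifying that the upper bound $\min(k-1,n-j)$ on $s$ really does translate into the clean range $1 \leq l \leq c$, with no boundary terms lost). Once that bijection is in place, everything else reduces to routine manipulations of geometric series and common-denominator algebra.
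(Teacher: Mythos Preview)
Your approach is correct and genuinely different from the paper's. The paper works in the opposite direction: it expands the rational function on the right as a product of geometric series, isolates the coefficient of $t^j$, then extracts the coefficient of $x^k y^{n-k}$ and simplifies it into the form $\binom{n+1}{j+1}\sum_s \binom{n-j}{s}\frac{n-s+1}{n+1}$ from Theorem~\ref{f}. You instead start from the two-term expression in the proof of Theorem~\ref{f} and sum forward, using the change of variables $(n,k,j,s)\mapsto(c,s,b,l)$ to decouple the indices and reduce everything to the multinomial identity $\sum_{s,b\geq0}\binom{c+s+b}{c,s,b}x^s y^b=(1-x-y)^{-c-1}$ followed by geometric sums in $c$. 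Your decomposition also produces two clean intermediate closed forms (notably $F_A$ has no $1-x-y$ factor in its denominator), whereas the paper never splits the target. Your route is more constructive---it derives the closed form rather than verifying a guessed one---while the paper's coefficient-extraction argument is shorter once the answer is in hand. The bookkeeping you flag as the only obstacle is indeed harmless: since $\binom{n-j}{s}$ and $\binom{n-j-1}{s}$ vanish for $s>n-j$ and $s>n-j-1$ respectively, the nominal upper bound $k-1$ on $s$ automatically enforces $b\geq0$, and the remaining constraints collapse exactly to $c\geq1$, $s,b\geq0$, $1\leq l\leq c$ as you claim.
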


\begin{proof}
The coefficient of $x^k y^{n-k} t^j$ in
\begin{eqnarray*}
& &
\frac{tx(1-x)}{(1-x-y)(1-x-y-tx)(1-x-y-ty)}\\
&=&
\frac{tx(1-x)}{(1-x-y)^3} \cdot 
\frac{1}{1-\frac{tx}{1-x-y}} \cdot
\frac{1}{1-\frac{tx}{1-x-y}}\\
&=&
\frac{tx(1-x)}{(1-x-y)^3}
\left(
\sum_{p\ge0}
\left(
\frac{tx}{1-x-y}
\right)^p
\right)
\left(
\sum_{q\ge0}
\left(
\frac{tx}{1-x-y}
\right)^q
\right)\\
&=&
\frac{tx(1-x)}{(1-x-y)^3}
\left(
\sum_{p\ge0}
\sum_{q\ge0}
\left(
\frac{t}{1-x-y}
\right)^{p+q}
x^p y^q 
\right)\\
&=&
\sum_{p\ge0}
\sum_{q\ge0}
\frac{t^{p+q+1}}{(1-x-y)^{p+q+3}}
(1-x) x^{p+1} y^q
\end{eqnarray*}
is equal to the coefficient of $x^k y^{n-k}$ in
$$
\sum_{p=0}^{j-1}
\frac{(1-x) x^{p+1} y^{j-p-1}}{(1-x-y)^{j+2}}
$$
(since $p+q+1=j$ if and only if $q=j-p-1 \geq 0$).
It then follows that
\begin{eqnarray*}
\sum_{p=0}^{j-1}
\frac{(1-x) x^{p+1} y^{j-p-1}}{(1-x-y)^{j+2}}
&=&
\sum_{p=0}^{j-1}
(1-x) x^{p+1} y^{j-p-1}
\sum_{r=0}^\infty
\binom{j+r+1}{j+1}
(x+y)^r\\
&=&
\sum_{p=0}^{j-1}
\sum_{r\ge0}
\sum_{u=0}^r
(1-x) x^{p+1} y^{j-p-1}
\binom{j+r+1}{j+1}
\binom{r}{u}
x^u y^{r-u}\\
&=&
\sum_{p=0}^{j-1}
\sum_{r\ge0}
\sum_{u=0}^r
\binom{j+r+1}{j+1}
\binom{r}{u}
x^{p+u+1} y^{j-p+r-u-1}\\
& &
-
\sum_{p=0}^{j-1}
\sum_{r\ge0}
\sum_{u=0}^r
\binom{j+r+1}{j+1}
\binom{r}{u}
x^{p+u+2} y^{j-p+r-u-1}.
\end{eqnarray*}
Note that
\begin{itemize}
\item
$x^{p+u+1} y^{j-p+r-u-1} = x^k y^{n-k}$ if and only if
$p+u+1=k$ and $j+r=n$ ;
\item
$x^{p+u+2} y^{j-p+r-u-1} = x^k y^{n-k}$ if and only if
$p+u+2=k$ and $j+r+1=n$.
\end{itemize}
Thus, the coefficient of $x^k y^{n-k}$ is
\begin{eqnarray*}
& &
\sum_{p=0}^{j-1}
\sum_{r=n-j}
\sum_{u=k-p-1}
\binom{j+r+1}{j+1}
\binom{r}{u}
-
\sum_{p=0}^{j-1}
\sum_{r=n-j-1}
\sum_{u=k-p-2}
\binom{j+r+1}{j+1}
\binom{r}{u}\\
&=&
\sum_{p=\max\{0,k-1-n+j\}}^{\min\{j-1,k-1\}}
\binom{n+1}{j+1}
\binom{n-j}{k-p-1}
-
\sum_{p=\max\{0,k-1-n+j\}}^{\min\{j-1,k-1\}}
\binom{n}{j+1}
\binom{n-j-1}{k-p-2}\\
&=&
\sum_{s=\max\{0,k-j\}}^{\min\{k-1,n-j\}}
\binom{n+1}{j+1}
\binom{n-j}{s}
-
\sum_{s=\max\{0,k-j\}}^{\min\{k-1,n-j\}}
\binom{n}{j+1}
\binom{n-j-1}{s-1}\\
&=&
\binom{n+1}{j+1}
\sum_{s=\max\{0,k-j\}}^{\min\{k-1,n-j\}}
\left(
\binom{n-j}{s}
-
\frac{n-j}{n+1}
\binom{n-j-1}{s-1}
\right)\\
&=&
\binom{n+1}{j+1}
\sum_{s=\max\{0,k-j\}}^{\min\{k-1,n-j\}}
\binom{n-j}{s}
\frac{n-s+1}{n+1},
\end{eqnarray*}
as desired.
\end{proof}

By the proof of Theorem \ref{RS}, it is also shown that

\begin{corollary}
Let $f_j (\Delta_{n,k})$ denote the number of $j$-faces of $\Delta_{n,k}$.
Then, we have
$$
\sum_{n\ge1}
\sum_{k=1}^n
\sum_{j=1}^n
f_j (\Delta_{n,k})\ 
x^k y^{n-k}
t^j
=
\frac{xt}{(1-x-y)(1-x-y-xt)(1-x-y-yt)}.
$$
\end{corollary}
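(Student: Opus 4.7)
The plan is to extract the coefficient of $x^k y^{n-k} t^j$ on the right-hand side and match it with the formula for $f_j(\Delta_{n,k})$ given in Corollary \ref{f-}, essentially re-running the computation of the proof of Theorem \ref{RS} with the numerator factor $(1-x)$ dropped.

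First I would repeat the partial-fractions/geometric-series expansion verbatim from the proof of Theorem \ref{RS}, but starting from $\frac{xt}{(1-x-y)(1-x-y-xt)(1-x-y-yt)}$. Writing
$$
\frac{xt}{(1-x-y)(1-x-y-xt)(1-x-y-yt)}
=\sum_{p,q\ge 0}\frac{t^{p+q+1}}{(1-x-y)^{p+q+3}}\, x^{p+1} y^q,
$$
the coefficient of $x^k y^{n-k} t^j$ equals the coefficient of $x^k y^{n-k}$ in
$$
\sum_{p=0}^{j-1}\frac{x^{p+1}y^{j-p-1}}{(1-x-y)^{j+2}}.
$$

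Next, expanding $(1-x-y)^{-(j+2)}$ via the negative binomial series and collecting terms exactly as in Theorem \ref{RS}, the contribution labelled there as "the first sum" survives unchanged while the "second sum" (which arose from the extra $-x$ in the numerator of Theorem \ref{RS}) is absent. The upshot is that the coefficient of $x^k y^{n-k} t^j$ equals
$$
\binom{n+1}{j+1}\sum_{s=\max\{0,k-j\}}^{\min\{k-1,n-j\}}\binom{n-j}{s}.
$$
Since $\binom{n-j}{s}=0$ for $s>n-j$, the upper limit can be replaced by $k-1$, matching the formula for $f_j(\Delta_{n,k})$ in Corollary \ref{f-}.

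Finally, invoking Corollary \ref{f-} identifies this with $f_j(\Delta_{n,k})$ for $j\ge 1$. There is no real obstacle; the only bookkeeping check is to confirm that dropping the $(1-x)$ factor precisely removes the $-\binom{n}{j+1}\sum_s \binom{n-j-1}{s-1}$ correction that, in Theorem \ref{RS}, converted $f_j(\Delta_{n,k})$ into $f_j(\Delta'_{n,k})$ through the combinatorial identity $\binom{n-j}{s}-\frac{n-j}{n+1}\binom{n-j-1}{s-1} = \binom{n-j}{s}\cdot\frac{n-s+1}{n+1}$.
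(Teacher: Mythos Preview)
Your proposal is correct and is exactly what the paper intends. The paper does not spell out a separate proof; it simply notes that the corollary is ``also shown'' by the proof of Theorem \ref{RS}, and your observation---that dropping the factor $(1-x)$ leaves only the ``first sum'' $\binom{n+1}{j+1}\sum_{s}\binom{n-j}{s}$, which by Corollary \ref{f-} equals $f_j(\Delta_{n,k})$---is precisely the content of that remark.
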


On the other hand, by Theorem \ref{RS}, we have
the 
$f$-vector of the hypersimplicial decomposition of the unit cube,
and its generating function.

\begin{theorem}\label{sum}
Let ${f_j} (\Delta'_{n,k})$ denote the number of $j$-faces of
the half-open hypersimplex $\Delta'_{n,k}$.
Then, we have
$$
\sum_{k=1}^n
{f_j} (\Delta'_{n,k})
=
j \cdot 2^{n-j-1} \frac{n+j+2}{n+1}
\cdot
\binom{n+1}{j+1}
$$
and
$$
\sum_{n\ge1}
\sum_{k=1}^n
{f_j} (\Delta'_{n,k})\ 
x^n
=
\frac{j x^j(1-x)}{(1-2x)^{j+2}}
.$$
\end{theorem}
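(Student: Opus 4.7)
The key observation is that the sum $\sum_{k=1}^{n} f_j(\Delta'_{n,k})$ is obtained from the three-variable generating function of Theorem \ref{RS} by specializing $y = x$, since $x^k y^{n-k}$ collapses to $x^n$ under this substitution. Thus the plan is to reduce Theorem \ref{sum} to an extraction of coefficients from Theorem \ref{RS}.

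First I would substitute $y = x$ in the identity of Theorem \ref{RS}, obtaining
\[
\sum_{n \ge 1} \sum_{j=1}^{n} \left( \sum_{k=1}^{n} f_j(\Delta'_{n,k}) \right) x^n t^j
= \frac{(1-x) x t}{(1-2x)(1-2x-xt)^2}.
\]
Next I would extract the coefficient of $t^j$ on the right-hand side. Writing the factor $(1-2x-xt)^{-2}$ as $(1-2x)^{-2} \bigl(1 - \tfrac{xt}{1-2x}\bigr)^{-2}$ and expanding via the identity $(1-z)^{-2} = \sum_{m \ge 0} (m+1) z^m$, I collect powers of $t$ to see that the coefficient of $t^j$ (corresponding to $m = j-1$) is exactly $\dfrac{j x^j (1-x)}{(1-2x)^{j+2}}$, which establishes the second formula.

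For the closed form for $\sum_{k=1}^n f_j(\Delta'_{n,k})$, I would then extract the coefficient of $x^n$ from $\dfrac{j x^j (1-x)}{(1-2x)^{j+2}}$ using the standard expansion
\[
\frac{1}{(1-2x)^{j+2}} = \sum_{r \ge 0} \binom{r+j+1}{j+1} 2^r x^r.
\]
The two contributions (from $x^j$ and from $-x^{j+1}$) give $j \cdot 2^{n-j-1} \bigl( 2 \binom{n+1}{j+1} - \binom{n}{j+1} \bigr)$. It remains to verify the simple identity
\[
2 \binom{n+1}{j+1} - \binom{n}{j+1} = \binom{n+1}{j+1} + \binom{n}{j} = \frac{n+j+2}{n+1} \binom{n+1}{j+1},
\]
which follows from Pascal's rule together with $\binom{n}{j} = \tfrac{j+1}{n+1}\binom{n+1}{j+1}$.

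There is no serious obstacle here; the argument is a straightforward specialization plus coefficient extraction. The only mild subtlety is the final binomial manipulation, where one must recognize the factor $(n+j+2)/(n+1)$ as arising from Pascal's identity applied to $\binom{n+1}{j+1} - \binom{n}{j+1} = \binom{n}{j}$. An alternative independent verification could be carried out by summing the explicit formula of Theorem \ref{f} directly over $k$ using $\sum_{k} \binom{n-j}{k-1-s} = 2^{n-j}$ type identities, but routing everything through the generating function of Theorem \ref{RS} is much cleaner and unifies the two claims of the theorem.
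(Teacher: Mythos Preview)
Your proposal is correct and follows essentially the same approach as the paper: substitute $y=x$ in Theorem~\ref{RS}, extract the coefficient of $t^j$ via the expansion of $(1-z)^{-2}$, and then extract the coefficient of $x^n$ using the negative-binomial expansion of $(1-2x)^{-(j+2)}$. The only cosmetic difference is that you justify the final binomial simplification through Pascal's rule and the relation $\binom{n}{j}=\tfrac{j+1}{n+1}\binom{n+1}{j+1}$, whereas the paper writes the same identity in the form $2\binom{m+j+1}{j+1}-\binom{m+j}{j+1}=\tfrac{m+2j+2}{m+j+1}\binom{m+j+1}{j+1}$ with $m=n-j$.
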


\begin{proof}
By substituting $x$ for $y$ in the equation of Theorem \ref{RS}, we have
\begin{eqnarray*}
\sum_{n\ge1}
\sum_{k=1}^n
\sum_{j=1}^n
{f_j} (\Delta'_{n,k})\ 
x^n
t^j
&=&
\frac{(1-x)xt}{(1-2x)(1-2x-xt)^2}\\
&=&
\frac{(1-x)xt}{(1-2x)^3(1-\frac{x}{1-2x} t)^2}\\
&=&
\frac{(1-x)xt}{(1-2x)^3}
\sum_{j\ge 0} (j+1) \left(\frac{x}{1-2x}\right)^j t^j.
\end{eqnarray*}
Thus, we have
$$
\sum_{n\ge 1}
\sum_{k=1}^n
{f_j} (\Delta'_{n,k})
x^n
=
\frac{x(1-x)}{(1-2x)^3} \cdot
j \left(\frac{x}{1-2x}\right)^{j-1}
=
\frac{j x^j(1-x)}{(1-2x)^{j+2}}
.$$
Moreover, the coefficient of $x^n$ in 
\begin{eqnarray*}
\frac{j x^j(1-x)}{(1-2x)^{j+2}}
&=&
j x^j(1-x) \sum_{m\ge 0}
\binom{m+j+1}{j+1} 2^m x^m\\
&=&
j
\sum_{m\ge 0}
\binom{m+j+1}{j+1} 2^m x^{m+j}
-
j
\sum_{m\ge 0}
\binom{m+j+1}{j+1} 2^m x^{m+j+1}\\
&=&
j
\sum_{m\ge 0}
\binom{m+j+1}{j+1} 2^m x^{m+j}
-
j
\sum_{m\ge 0}
\binom{m+j}{j+1} 2^{m-1} x^{m+j}\\
&=&
j
\sum_{m\ge 0}
\left(
2 \binom{m+j+1}{j+1}
-
\binom{m+j}{j+1} 
\right)
2^{m-1} x^{m+j}
\\
&=&
j
\sum_{m\ge 0}
\frac{m+2j+2}{m+j+1}
\binom{m+j+1}{j+1} 
2^{m-1} x^{m+j}
\end{eqnarray*}
is
$$
j
\frac{(n-j)+2j+2}{(n-j)+j+1}
\binom{(n-j)+j+1}{j+1} 
2^{(n-j)-1} 
=
j \cdot 2^{n-j-1} \frac{n+j+2}{n+1}
\cdot
\binom{n+1}{j+1}.$$
\end{proof}

\begin{proof}[Geometric proof of Theorem \ref{sum}]
The first equation of Theorem \ref{sum} gives 
the $f$-vector of the hypersimplicial decomposition of the unit cube. 
Here we compute this $f$-vector directly. Similar as the proof of 
Theorem \ref{f}, for ${f_j} (\Delta'_{n,k})$, we count the $j$-dimensional intersections obtained by $(n-j)$ hyperplanes of the unit cube. 
There are
again two types of such $j$-faces:
\begin{enumerate}
 \item obtained by intersections of $n-j$ hyperplanes of the form 
$\HH_i^{(0)}$ or $\HH_j^{(1)}$.
There are $2^{n-j}\binom{n}{n-j}$;
 \item obtained by intersections of $n-j-1$ hyperplanes of the form
$\HH_i^{(0)}$ or $\HH_j^{(1)}$
together with one more hyperplane
defined by
 $\sum_{i=1}^nx_i =k$. 
Similar to the argument in the proof of Theorem \ref{f}, we can see that each combination of the 
 $n-j-1$ hyperplanes intersects nontrivially (i.e., get $j$-dimensional intersection) with exactly $j$ hyperplanes of 
 the form $\sum_{i=1}^nx_i =k$. In fact, for a given choice of $n-j-1$ hyperplanes of the form $\HH_i^{(0)}$ or $\HH_j^{(1)}$, let $s$ be the number 
 of indices $m$ with $\HH_m^{(1)}$, then above $j$ hyperplanes corresponds to $k=s+1,\dots,s+j$. 
Therefore, there are $j\cdot2^{n-j-1}\binom{n}{n-j-1}$
 such $j$-faces.
\end{enumerate}

Now comes a tricky part: the correct number of first type $j$-face should be $2^{n-j}\binom{n}{n-j}$ times $j$. This is because 
there are exactly $(j-1)$ $(j-1)$-faces in the interior of each such $j$-face, resulting in $j$ $j$-faces. Consider $f_2$ for the three dimensional cube
for a visual help. Therefore, there are in total 
$$2^{n-j}\binom{n}{n-j}\cdot j + j\cdot2^{n-j-1}\binom{n}{n-j-1} = j \cdot 2^{n-j-1} \frac{n+j+2}{n+1}
\cdot
\binom{n+1}{j+1}$$
$j$-faces in the hypersimplicial decomposition of the unit cube. 
\end{proof}

\section{Two open questions}

In this section, we present two open questions related with Theorem \ref{sum}. 

First, notice that in the above geometric proof for Theorem \ref{sum}, we get the result as a sum of two parts. Since the result of Theorem \ref{sum} is neat and simple,
it would be very nice to have a direct combinatorial proof avoiding sums.
\begin{question}
{\em Find a combinatorial proof for Theorem \ref{sum}.
}
\end{question}

We also observe a relation between Theorem \ref{sum} and the coefficients of Chebyshev polynomials.
It is known that, for $\ell >0$,
the Chebyshev polynomials 
$
T_\ell(x)$
of the first kind satisfies
\begin{eqnarray*}
T_\ell(x)
&=&
\frac{\ell}{2} \sum_{m=0}^{\lfloor \frac{\ell}{2} \rfloor}
(-1)^m
\frac{(\ell -m -1)!}{m!(\ell -2m)!}  (2 x)^{\ell -2m}\\
&=&
\sum_{m=0}^{\lfloor \frac{\ell}{2} \rfloor}
(-1)^m 
\frac{\ell}{\ell -m}
\binom{\ell-m}{m}
  2^{\ell -2m-1} x^{\ell -2m}
\end{eqnarray*}
On the other hand, 
we proved that
$$
\frac{1}{j}
\sum_{k=1}^n
{f_{j}} (\Delta'_{n,k})
=
2^{n-j-1}
\frac{n+j+2}{n+1}
\binom{n+1}{j+1}
.$$
Thus, for $j =m-1$ and $n=\ell-m-1$,
we have
$$
\frac{1}{j}
\sum_{k=1}^n
{f_{j}} (\Delta'_{n,k})
=
2^{\ell -2m-1}
\frac{\ell}{\ell-m}
\binom{\ell -m}{m}
$$
which is the absolute value of
the coefficient of $x^{\ell-2m}$
in $T_\ell(x)$. There are some known combinatorial models for Chebyshev polynomials such as \cite{BW}, 
 \cite{S} and \cite{M}, but their connection with $f$-vectors
studied here is not clear to us.
\begin{question}
{\em Find a combinatorial connection between Chebyshev polynomials and the sum of $f$-vectors 
for the half-open hypersimplex, or equivalently, the $f$-vectors of the hypersimplicial decomposition of the unit cube.

}
\end{question}

\noindent
\begin{acknowledgement}
 We want to thank Professor Richard Stanley for very helpful comments on our work.
\end{acknowledgement}


\begin{thebibliography}{DST}

\bibitem[BW]{BW} A. Benjamin and D. Walton, Counting on Chebyshev Polynomials,  {\em Math. Mag.}, {\bf 82} (2009) 117--126.
\bibitem[DH]{DH} E. De Negri and T. Hibi, Gorenstein algebras of Veronese type, {\em J. Algebra}, {\bf 193} (1997) 629--639.

\bibitem[DST]{DST} J. De Loera, B. Sturmfels and R. Thomas, Gr\"{o}bner bases and triangulations of the second hypersimplex, {\em  Combinatorica},
{\bf 15} (1995) 409--424.

\bibitem[Kat]{Kat} M. Katzmann, The Hilbert series of algebras of Veronese type, {\em Comm. Algebra},  {\bf 33} (2005) 1141--1146.

\bibitem[Li]{L} N. Li, Ehrhart $h^*$-vectors of hypersimplices, {\em Discrete Comput. Geom.}, {\bf 48} (2012) 847--878.

\bibitem[LP]{LP} T. Lam and A. Postnikov, Alcoved Polytopes I, {\em  Discrete Comput. Geom.}, {\bf 38} (2007) 453--478.

\bibitem[Mun]{M} E. Munarini, A combinatorial interpretation of the Chebyshev polynomials, {\em SIAM J. Discrete Math.} {\bf 20} (2006), 649--655.

\bibitem[Sha]{S} L. Shapiro, A combinatorial proof of a Chebyshev polynomial identity, {\em Discrete Math.}, {\bf 34} (1981) 203--206.
\bibitem[Sta]{Sta} R. Stanley, Eulerian partitions of a unit hypercube, {\em Higher Combinatorics (M.~Aigner, ed)}, Reidel, Dordrecht/Boston, 1977, p.49.



\bibitem[Zie]{Ziegler} G. M. Ziegler, Web page:\\
{\tt http://www3.math.tu-berlin.de/Vorlesungen/SS08/DiskreGeom/exercises/ex08.pdf.
}
\end{thebibliography}
\end{document}